\newtheorem{theorem}{Theorem}[section]
\newtheorem{lemma}[theorem]{Lemma}
\theoremstyle{remark}
\newtheorem{remark}[theorem]{Remark}
\newcommand{\R}{\mathbb{R}}
\newcommand{\K}{{\mathsf K}}
\newcommand{\KF}{{\mathcal K}}
\newcommand{\NK}{{\mathsf N}}
\renewcommand{\L}{{\mathsf L}}
\newcommand{\C}{\mathcal{C}}
\newcommand{\G}{\mathsf{G}}
\DeclareMathOperator{\sd}{sd}
\title{$d$-representability of simplicial complexes of fixed dimension\thanks{
I have obtained the main result of this note when I was working on my PhD
thesis. Thus the contents of this contribution also appears in modified version
in my PhD thesis.
}
}
\author{Martin Tancer\thanks{Department of Applied Mathematics and Institute for Theoretical Computer
Science (supported by project 1M0545
of The Ministry of Education of the Czech Republic), Faculty of Mathematics
and Physics, Charles University, Malostransk\'e n\'am.~25, 118~00 Prague,
Czech Republic. Partially supported by project GAUK 421511. {\tt Email:tancer@kam.mff.cuni.cz}.
}}
\begin{document}
\maketitle
\begin{abstract}
Let $\K$ be a simplicial complex with vertex set $V = \{v_1, \dots, v_n\}$. The
complex $\K$ is $d$-representable if there is a collection $\{C_1, \dots,
C_n\}$ of convex sets in $\R^d$ such that a subcollection $\{C_{i_1}, \dots,
C_{i_j}\}$ has a nonempty intersection if and only if $\{v_{i_1}, \dots,
v_{i_j}\}$ is a face of $\K$.

In 1967 Wegner proved that every simplicial complex of dimension $d$ is $(2d +
1)$-representable. He also suggested that his bound is the best possible, i.e.,
that there are $d$-dimensional simplicial complexes which are not
$2d$-representable. However, he was not able to prove his suggestion.

We prove that his suggestion was indeed right. Thus we add another piece to the
puzzle of intersection patterns of convex sets in Euclidean space.
\end{abstract}

\section{Introduction}
Let $\C$ be a collection of sets. The \emph{nerve} of $\C$ is a simplicial
complex\footnote{We assume that the reader is familiar with simplicial
complexes; otherwise we refer him to standard sources such
as~\cite{hatcher01,munkres84,matousek03}.} with vertex set $\C$ and with faces of the form $\{C_1, \cdots, C_k\}
\subseteq \C$ such that the intersection $C_1 \cap \cdots \cap C_k$ is
nonempty. We say that a simplicial complex is \emph{$d$-representable} 
if it is isomorphic to the nerve of a finite collection of convex sets in
$\R^d$. This notion is designed to capture possible `intersection patterns' of
convex sets in $\R^d$. Study of intersection patterns of convex sets is active
since a theorem by Helly~\cite{helly23}. 

Let us also mention that $d$-representable simplicial complexes are very closely
related to well studied intersection graphs of convex sets. An intersection
graph only records which pairs of convex sets have a nonempty intersection;
however, it does not take care of multiple intersections. Thus
$d$-representable complexes provide more detailed information about the
intersection pattern. 

From another point of view, need of understanding intersection patterns of
convex sets appears, e.g., also in manifold learning. The task might be to
reconstruct the homotopy type of a manifold $M$ given by sample points $\{p_i\}$.
Sample points can be enlarged to convex sets $\{C_i\}$; and under certain
conditions $M$ is homotopic to $\bigcup C_i$. On the other, via the nerve
theorem, $\bigcup C_i$ is homotopic to the nerve of $\{C_i\}$. See,
e.g.,~\cite{attali-lieutier10} for more details.

The reader is referred to~\cite{eckhoff85} or~\cite{tancer11surveyarxiv} for
more background on intersection patterns of convex sets. 

One of the question arising in this area is how the dimension of a complex
affects $d$-representability. Wegner~\cite{wegner67} showed that a complex of
dimension $d$ is always $(2d+1)$-representable. (This result was also
independently found by Perel'man~\cite{perelman85}.) Wegner also suggested that
the value $2d+1$ is the best possible, i.e., that there are $d$-dimensional
simplicial complexes which are not $2d$-representable. (The question about the
best possible value is also reproduced by Eckhoff~\cite{eckhoff85}, and the
author is not aware that this question would be answered yet.) 

Wegner proved that the
barycentric subdivision\footnote{In this case, every edge is subdivided into two
edges and a new vertex in the center of the edge is inserted.} of a nonplanar
graph is not $2$-representable. He also suggested that the barycentric
subdivision of a $d$-dimensional complex that does not embed into $\R^{2d}$ is
not $2d$-representable; however, he was not able to prove his suggestion. 

In this short note we prove that the value $2d + 1$ is indeed the best
possible. Let $\Delta_n$ denotes the full simplex of dimension $n$ and let
$\K^{(k)}$ denotes the $k$-skeleton of a simplicial complex $\K$. We prove that
the barycentric subdivision of $\Delta^{(d)}_{2d+2}$ and also the barycentric
subdivision of many other complexes is not $d$-representable; see the precise
statement below.

\begin{theorem}
\label{t:dimen}
The barycentric subdivision of $\Delta^{(d)}_{2d+2}$ is not $d$-representable.
More generally, if $\L$ is a $d$-dimensional simplicial complex with vanishing
Van Kampen obstruction, then the barycentric subdivision $\sd \L$ is not
$d$-representable.
\end{theorem}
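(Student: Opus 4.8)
The plan is to contradict $d$-representability by exploiting a topological invariant that survives the passage from an abstract complex to its nerve. Suppose for contradiction that $\sd \L$ is $d$-representable, so there are convex sets $\{C_w\}$ in $\R^d$ indexed by the vertices $w$ of $\sd \L$ (which are the nonempty faces of $\L$), with $\bigcap C_w \neq \emptyset$ exactly when the corresponding vertices span a face of $\sd \L$. The key idea is that a $d$-representation provides, via a standard argument, a map from $\sd \L$ into $\R^d$ (or a closely related space) that is well-behaved on faces, and I want to derive from it an embedding-like map of $\L$ into $\R^{2d}$ that contradicts the hypothesis that $\L$ has vanishing Van Kampen obstruction.

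\medskip\noindent
First I would recall that the Van Kampen obstruction is a cohomological invariant whose vanishing is the obstruction to embedding a $d$-dimensional complex $\L$ into $\R^{2d}$; it is defined on the deleted product $\L^{\times 2}_\Delta$ (pairs of disjoint faces) via an equivariant map to $S^{2d-1}$, and its nonvanishing forces nonembeddability. The phrase ``vanishing Van Kampen obstruction'' in the statement should be read as the class being \emph{nonzero} (so that $\L$ is a genuinely nonembeddable complex, in the spirit of the nonplanar-graph example), and the theorem asserts that for such $\L$ the subdivision $\sd\L$ cannot be $d$-represented. The core of the argument is to show that a $d$-representation of $\sd\L$ would produce an equivariant map witnessing \emph{vanishing} of the obstruction, contradicting the hypothesis. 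Concretely, I would use the convex sets to build, for each pair of vertices $u,v$ of $\sd\L$ that are joined (or not) by the nerve structure, a controlled assignment of points in $\R^d$; the factor of $2$ in $2d$ versus $d$ is recovered because the barycentric subdivision encodes order complexes and pairs of faces, so that a map defined using the $d$-dimensional convex sets on the two ``halves'' of a deleted product lands in a $2d$-dimensional target.

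\medskip\noindent
The technical heart, and the main obstacle, is translating the combinatorial nerve condition into a continuous equivariant map that detects the obstruction. I would proceed in steps: (i) pick a point $p_F$ in the convex set $C_F$ for each vertex $F$ of $\sd\L$, and, exploiting convexity together with the nerve condition, extend this to a map defined on the geometric realization of $\sd\L$ whose image interactions are governed by which convex sets meet; (ii) use the nerve theorem or a direct convexity argument (Helly-type reasoning on the $d$-dimensional sets) to guarantee that faces of $\sd\L$ map consistently, while non-faces — i.e.~pairs of \emph{disjoint} faces of $\L$, which correspond to vertices of $\sd\L$ whose convex sets are disjoint — are kept separated; (iii) assemble these into an equivariant map $\L^{\times 2}_\Delta \to S^{2d-1}$ that is null-homotopic because it factors through the $d$-dimensional convex geometry, thereby forcing the Van Kampen obstruction to vanish. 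The delicate point in step (iii) is the bookkeeping that shows the map really is $\Z/2$-equivariant and that its existence is equivalent to vanishing of the obstruction; here I expect to lean on the interplay between the barycentric subdivision $\sd\L$ and the deleted product $\L^{\times 2}_\Delta$, whose simplices are naturally pairs of disjoint faces and hence correspond to disjoint vertices of $\sd\L$.

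\medskip\noindent
The reason the barycentric subdivision is essential — and I would emphasize this — is that it converts the abstract ``disjointness of faces'' relation of $\L$ into an honest nonadjacency relation between vertices of $\sd\L$, so that a nerve realization of $\sd\L$ by convex sets directly controls whether the images of disjoint faces can be separated in $\R^d$. Summing over the two factors gives the $2d$-dimensional target, and the convexity (Helly / Radon-type) structure of a $d$-representation is exactly what produces the null-homotopy. Thus a $d$-representation of $\sd\L$ implies $\L$ embeds into $\R^{2d}$ (equivalently, its obstruction vanishes), and contrapositively, if the obstruction is present then no $d$-representation exists; specializing to $\L = \Delta^{(d)}_{2d+2}$, whose nonembeddability into $\R^{2d}$ is the classical Van Kampen--Flores result, yields the first assertion of Theorem~\ref{t:dimen}.
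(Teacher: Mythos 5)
Your overall strategy is the right one and is essentially the paper's: a representation of $\sd\L$ by convex sets should yield a map of $|\L|$ into the ambient Euclidean space under which disjoint simplices of $\L$ have disjoint images, and this contradicts Van Kampen--Flores (for $\Delta^{(d)}_{2d+2}$) or, in general, non-vanishing of the Van Kampen obstruction. You also correctly diagnose that ``vanishing'' in the statement must be read as ``non-vanishing'' (the paper's own remark after Theorem~\ref{t:vkf} confirms this), and you correctly identify why the barycentric subdivision matters: disjointness of faces of $\L$ becomes non-adjacency (``remoteness'') of the corresponding vertices of $\sd\L$, hence disjointness of the corresponding convex sets. However, the technical heart --- your steps (i)--(ii) --- is left as a placeholder, and the construction you actually describe would fail. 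Choosing a point $p_F\in C_F$ only for each \emph{vertex} $F$ of $\sd\L$ and extending linearly over $|\sd\L|$ gives simplices whose images are convex hulls of points lying in several different convex sets; such a hull need not stay inside $\bigcup_F C_F$, so nothing prevents the images of two remote faces from crossing, and no amount of Helly-type reasoning on the $C_F$ alone repairs this. The missing idea (the paper's Lemma~\ref{l:remote}) is to pass to the \emph{second} barycentric subdivision: for every \emph{face} $\sigma$ of $\sd\L$ choose a point $p_\sigma\in\bigcap_{F\in\sigma}C_F$ (nonempty precisely by the nerve condition), and extend linearly over $\sd\sd\L$. Then each simplex of $\sd\sd\L$ is a chain $\sigma_1\supsetneq\cdots\supsetneq\sigma_m$, all of whose vertex images lie in $\bigcap_{F\in\sigma_m}C_F\subseteq C$ for any fixed $C\in\sigma_m$, so each such simplex maps into a single convex set; hence $g(|\sd\sigma|)\subseteq\bigcup_{F\in\sigma}C_F$ and remoteness gives disjoint images. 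Since $|\sd\sd\L|=|\L|$, this is the desired map on $|\L|$.

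Two further points on bookkeeping. First, the paper in fact proves the stronger statement that $\sd\Delta^{(d)}_{2d+2}$ is not $2d$-representable (the ``$d$'' in the theorem as printed is evidently a slip, given the abstract and the proof); starting from a $d$-representation as you do proves only the literal, weaker claim. Either way the contradiction works, because a map into $\R^d\subseteq\R^{2d}$ with disjoint faces separated is in particular such a map into $\R^{2d}$; but your explanation that ``summing over the two factors gives the $2d$-dimensional target'' is not the mechanism --- what one needs is that the normalized difference $(x,y)\mapsto (g(x)-g(y))/\lVert g(x)-g(y)\rVert$ on the deleted product avoids the origin, landing equivariantly in $S^{m-1}\subseteq S^{2d-1}$ where $\R^m$ is the ambient space. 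Second, your step (iii) is more machinery than is needed for the first assertion: for $\L=\Delta^{(d)}_{2d+2}$ one simply quotes the Van Kampen--Flores theorem, which directly forbids any continuous map $|\Delta^{(d)}_{2d+2}|\to\R^{2d}$ under which all pairs of disjoint $d$-faces have disjoint images; the equivariant-map formulation is only needed for the general statement about complexes with non-zero obstruction, where it is indeed the standard argument.
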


\begin{remark}
\emph{Van Kampen obstruction} is a certain cohomology obstruction for embeddability
$d$-dimensional simplicial complexes into $\R^{2d}$. We are not going to define
this obstruction precisely since we would need to many preliminaries. The
interested reader is referred either to~\cite{melikhov09} for a survey or
to~\cite[Appendix D]{matousek-tancer-wagner11} for an elementary exposition.

Let us just mention some properties of Van Kampen obstruction. If $\K$ is a
$d$-dimensional simplicial complex which embeds into $\R^{2d}$, then its Van
Kampen obstruction has to vanish. If $d \neq 2$, then also the converse is true,
i.e., a $d$-dimensional simplicial complex with vanishing Van Kampen
obstruction embeds into $\R^{2d}$. In case $d = 2$ there are, however,
simplicial complexes with vanishing Van Kampen obstruction which do not embed
into $\R^4$; see~\cite{freedman-krushkal-teichner94}.
\end{remark}

Regarding our proof method, let us first indicate Wegner's approach for case
$d=1$. Let $\G$ be a nonplanar graph (graph is a $1$-dimensional simplicial
complex). Assuming that $\sd \G$ was $2$-representable, Wegner is able to
construct a piecewise linear embedding $g$ of the geometric realization $|\sd
\G|$ into $\R^2$. This contradicts the fact that $\G$ is nonplanar.

It seems hard to extend this construction in such a way that $g$ would be an
embedding in higher dimensions. Our main observation is that it is not
necessary to require that $g$ is an embedding in order to obtain a
contradiction with an embeddability-type result. We only construct such a $g$
that disjoint simplices have disjoint images, which is still in contradiction
with vanishing Van Kampen obstruction.

\section{Barycentric subdivision}
In order to set up notation, we recall the definition of a barycentric
subdivision of a simplicial complex.

From geometric point of view we put a new vertex into the barycenter of
every geometric face of a simplicial complex $\K$. 
Then we form a new simplicial complex whose
vertices are the barycenters and whose faces are simplices formed in between
these barycenters.

It is perhaps more convenient to state the precise definition in abstract
setting. Given a simplicial complex $\K$ the \emph{barycentric subdivision} of
$\K$ is a simplicial complex $\sd \K$ whose set of vertices is the set $\K
\setminus \emptyset$ and whose faces are collections $\{\alpha_1, \dots,
\alpha_m\}$ of faces of $\K$ such that
$$
\alpha_1 \supsetneq \alpha_2 \supsetneq \cdots \supsetneq \alpha_m \neq
\emptyset.
$$
The vertices of $\sd \K$ play role of barycenters of faces of $\K \setminus
\emptyset$. The faces of $\sd \K$ are the simplices in between of these
barycenters. See Figure~\ref{f:baryc}.

\begin{figure}
\begin{center}
\includegraphics{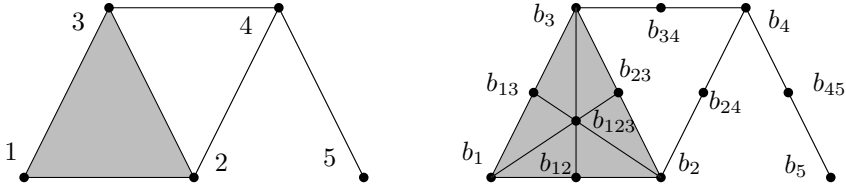}
\end{center}
\caption{Barycentric subdivision of a complex. For example, the vertex $b_{13}$
denotes the barycenter of the face $13 = \{1, 3\}$ (in geometric setting).}
\label{f:baryc}
\end{figure}

The complexes $\K$ and $\sd \K$ have the same geometric realization, i.e., $|
\K
| = |\sd \K|$.

\section{Proof}

For the proof we will need two auxiliary results.

\begin{theorem}[Van Kampen - Flores theorem; see,
e.g.,~{\cite[Theorem 5.1.1]{matousek03}}]
\label{t:vkf}
Let $\K = \Delta^{(d)}_{2d+2}$. Then for any continuous map $f\colon |\K|
\rightarrow \R^{2d}$ there are two disjoint $d$-dimensional simplices $\gamma$
and $\delta$ of $\K$ such that their images $f(|\gamma|)$ and $f(|\delta|)$
intersect.
\end{theorem}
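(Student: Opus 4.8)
The plan is to argue by contradiction using the Borsuk--Ulam theorem applied to the two-fold \emph{deleted join} of $\K$. Suppose no such pair of simplices exists, i.e.\ there is a continuous map $f\colon |\K|\to\R^{2d}$ that sends every two disjoint $d$-simplices to \emph{disjoint} images. My first step is a combinatorial reduction. In $\K=\Delta^{(d)}_{2d+2}$ every face has at most $d+1$ vertices, while $\K$ has $2d+3$ vertices; hence any two disjoint faces $\sigma,\tau$ together use at most $2d+2$ vertices, and so they can be extended to \emph{disjoint} $d$-faces $\gamma\supseteq\sigma$ and $\delta\supseteq\tau$. Since $f(|\gamma|)\cap f(|\delta|)=\emptyset$ by assumption and $f(|\sigma|)\subseteq f(|\gamma|)$, $f(|\tau|)\subseteq f(|\delta|)$, it follows that $f$ sends \emph{any} two disjoint faces of $\K$ (of arbitrary dimensions) to disjoint images. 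This stronger property is what I actually use.

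Recall that the two-fold deleted join $\K^{*2}_\Delta$ is the free $\mathbb{Z}_2$-complex whose faces are the formal joins $\sigma_1\uplus\sigma_2$ of pairs of \emph{disjoint} faces $\sigma_1,\sigma_2$ of $\K$, with the $\mathbb{Z}_2$-action swapping the two copies. Using the reduced property of $f$, I would build a $\mathbb{Z}_2$-map $\tilde f\colon |\K^{*2}_\Delta|\to S^{2d}$ by sending a point $t x_1\oplus(1-t)x_2$ (with $x_1\in|\sigma_1|$, $x_2\in|\sigma_2|$, $\sigma_1\cap\sigma_2=\emptyset$) to the normalization of the vector $\bigl(tf(x_1)-(1-t)f(x_2),\,2t-1\bigr)\in\R^{2d+1}$. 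This vector never vanishes: its last coordinate is $0$ only when $t=\tfrac12$, and then the first block equals $\tfrac12\bigl(f(x_1)-f(x_2)\bigr)\neq 0$, because $f(x_1)\in f(|\sigma_1|)$ and $f(x_2)\in f(|\sigma_2|)$ lie in disjoint sets. The assignment is continuous and antipodal (swapping the two copies negates the vector), so $\tilde f$ is a genuine $\mathbb{Z}_2$-map into the antipodally-acted sphere $S^{2d}$.

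The remaining, and main, step is to show that no such map can exist, and this is where the specific dimensions enter. I would prove that $\K^{*2}_\Delta$ is $2d$-connected. Concretely, encoding vertex $i$ of the first (resp.\ second) copy by a plus (resp.\ minus) at coordinate $i$, the faces of $\K^{*2}_\Delta$ are exactly the sign vectors on the $2d+3$ vertices that contain no antipodal pair and use at most $d+1$ plus-signs and at most $d+1$ minus-signs. Thus $\K^{*2}_\Delta$ sits inside the boundary of the $(2d+3)$-dimensional cross-polytope, which is $S^{2d+2}$, as the subcomplex obtained by capping both sign-counts at $d+1$, and one checks that this capped subcomplex is a PL $(2d+1)$-sphere (for $d=0$ it is the hexagon $C_6\cong S^1$). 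In particular it is $2d$-connected, hence of $\mathbb{Z}_2$-coindex at least $2d+1$: building an equivariant map skeleton by skeleton over $S^{2d+1}$ and extending at each stage by connectivity yields a $\mathbb{Z}_2$-map $S^{2d+1}\to\K^{*2}_\Delta$. Composing with $\tilde f$ produces a $\mathbb{Z}_2$-map $S^{2d+1}\to S^{2d}$, contradicting the Borsuk--Ulam theorem and completing the proof. I expect the genuine obstacle to be precisely this connectivity computation: verifying that the capped cross-polytope subcomplex is $2d$-connected (equivalently, pinning down its homotopy type) is the one place where the exact value $N=2d+2$ is essential and where care is required, whereas everything else is the standard deleted-join/Borsuk--Ulam packaging.
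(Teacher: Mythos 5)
Your proposal is correct and is essentially the proof the paper relies on: the paper does not prove Theorem~\ref{t:vkf} itself but cites Matou\v{s}ek~\cite[Theorem 5.1.1]{matousek03}, whose Borsuk--Ulam argument proceeds exactly as yours --- first upgrade the hypothesis to disjoint images for \emph{all} pairs of disjoint faces, then pass to the deleted join $\K^{*2}_\Delta$, construct the equivariant map to $S^{2d}$ via the normalized difference vector with the extra $2t-1$ coordinate, and contradict Borsuk--Ulam using $\mathbb{Z}_2$-index $2d+1$ of the deleted join. The one step you defer, that the subcomplex of the $(2d+3)$-dimensional cross-polytope boundary with both sign-counts capped at $d+1$ is a PL $(2d+1)$-sphere, is precisely Bier's theorem applied to $\Delta^{(d)}_{2d+2}$, which is Alexander self-dual on its $2d+3$ vertices, and this is exactly how the cited source (Section 5.6 on Bier spheres) completes the argument.
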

We remark that the conclusion of the theorem remains true if $\K$ is replaced
with any $d$-dimensional complex with non-zero Van Kampen obstruction (in
particular, $\K$ has a non-zero Van Kampen obstruction). The fact that
Theorem~\ref{t:vkf} extends to complexes with non-zero obstruction just follows
from one of possible definitions of Van Kampen obstruction (and is trivial for
a reader familiar with this topic); see, e.g., exposition
in~\cite{freedman-krushkal-teichner94}.\footnote{There is a sign error
in~\cite{freedman-krushkal-teichner94} in the definition of Van Kampen
obstruction observed by Melikhov~\cite{melikhov09}. However, it does not affect
our conclusion.} On the other hand,
Theorem~\ref{t:vkf} for our specific $\K$ can be proved on more elementary
level using Borsuk-Ulam theorem; and that is why we also emphasize this
specific case.   


Let $\alpha$ and $\beta$ be faces of a simplicial complex $\K$. We say that
$\alpha$ and $\beta$ are \emph{remote} if there is no edge $ab \in \K$ with $a
\in \alpha, b \in \beta$.

\begin{lemma}
\label{l:remote}
Let $\KF$ be a collection of convex sets in $\R^m$ and let $\K :=
\NK(\KF)$ be the nerve of $\KF$. Then there is a
linear map $g: |\sd \K| \rightarrow \R^m$ such that $g(|\sd \alpha|)
\cap g(|\sd \beta|) = \emptyset$ for any remote $\alpha, \beta \in \K$.
\end{lemma}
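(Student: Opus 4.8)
The plan is to build $g$ directly from the convex sets, exploiting that the vertices of $\sd\K$ are exactly the nonempty faces of $\K$, and that each face of the nerve is a subcollection of $\KF$ with nonempty intersection. For each nonempty face $\sigma\in\K$ I would choose a point
$$
g(\sigma)\in\bigcap_{C\in\sigma}C,
$$
which is possible precisely because $\sigma$ is a face of $\NK(\KF)$. I then extend $g$ to all of $|\sd\K|$ as the unique map that is affine on each simplex of $\sd\K$; since the values on vertices are prescribed and the affine interpolations agree on shared faces, this is well defined, and being affine on each simplex is exactly what \emph{linear} means for a map out of a geometric realization. This produces a linear $g\colon|\sd\K|\to\R^m$.

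The heart of the argument is the containment $g(|\sd\alpha|)\subseteq\bigcup_{C\in\alpha}C$ for every face $\alpha\in\K$. To prove it, take any point $x\in|\sd\alpha|$. It lies in some simplex of $\sd\alpha$, that is, in a chain $\sigma_1\supsetneq\sigma_2\supsetneq\cdots\supsetneq\sigma_k$ of nonempty faces all contained in $\alpha$, so $g(x)$ is a convex combination $\sum_i t_i\,g(\sigma_i)$. Now pick any vertex $C\in\sigma_k$ of the \emph{smallest} face in the chain. Since $\sigma_k\subseteq\sigma_i$ for every $i$, we have $C\in\sigma_i$, hence $g(\sigma_i)\in C$ for all $i$. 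As $C$ is convex, the convex combination $g(x)$ again lies in $C$; and $C\in\sigma_k\subseteq\alpha$, so $C$ is one of the sets indexing the union. This establishes the containment.

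It then remains to feed in the remoteness hypothesis. If $\alpha$ and $\beta$ are remote, then for all $C\in\alpha$ and $D\in\beta$ there is no edge $CD\in\K$, i.e.\ $C\cap D=\emptyset$; therefore
$$
\Bigl(\bigcup_{C\in\alpha}C\Bigr)\cap\Bigl(\bigcup_{D\in\beta}D\Bigr)=\bigcup_{C\in\alpha,\,D\in\beta}(C\cap D)=\emptyset.
$$
Combining this with the containment applied to $\alpha$ and to $\beta$ shows that $g(|\sd\alpha|)$ and $g(|\sd\beta|)$ are confined to disjoint sets, hence are themselves disjoint, which is the desired conclusion.

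I expect the only delicate step to be the containment in the middle paragraph: the crucial (and easily mis-stated) observation is that it is the \emph{minimal} face $\sigma_k$ of a chain, not the maximal one, whose vertices pin every barycenter $g(\sigma_i)$ of the chain into a common convex set, and that convexity of each $C$ is exactly what keeps the affine interpolation inside that set. Once this is in hand, the remoteness condition translates transparently into disjointness of the enclosing unions, and the lemma follows without further work.
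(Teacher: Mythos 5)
Your proposal is correct and follows essentially the same route as the paper: pick a point $p(\KF')$ in $\bigcap \KF'$ for each vertex $\KF'$ of $\sd\K$, extend affinely, observe $g(|\sd\alpha|)\subseteq\bigcup_{C\in\alpha}C$, and conclude via remoteness. You in fact spell out the convexity/minimal-face argument for the containment more explicitly than the paper does, which only states it.
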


\begin{figure}
\begin{center}
\includegraphics{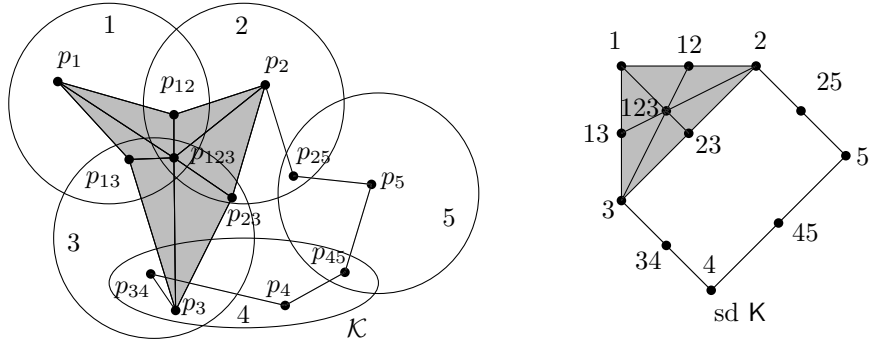}
\caption{Mapping $\sd \K$ into $\KF$. The notation is simplified. For instance
$12$ stands for $\{1, 2\}$, $p_{123}$ stands for $p(\{1,2,3\})$, etc.}
\label{f:mapsd}
\end{center}
\end{figure}

\begin{proof}
First we specify $g$ on the vertices of $\sd \K$ then we extend it linearly to
the whole $\sd \K$. See Figure~\ref{f:mapsd}.

A vertex of $\sd \K$ is a simplex of $\K$, i.e., a subcollection $\KF'$ of
$\KF$
with a nonempty intersection. Let us pick a point $p(\KF')$ inside $\cap \KF'$.
We set $g(\KF') := p(\KF')$
for $\KF' \in \K$. As we already mentioned, we extend $g$ linearly to $\sd
\K$.

If $\alpha = \KF' \in \K$, then $g(|\sd \alpha|) \subseteq \cup \KF'$. Thus
$g(|\sd \alpha|) \cap g(|\sd \beta|) = \emptyset$ for remote $\alpha, \beta \in
\K$.

\end{proof}

\begin{proof}[Proof of Theorem~\ref{t:dimen}]
First we prove the specific case.

Let $\K = \sd \Delta^{(d)}_{2d+2}$. For contradiction we assume that $\K$ is
$2d$-representable. Let $\KF$ be the $2d$-representation of it. (Without loss
of generality $\KF = \NK(\K)$.) According to Lemma~\ref{l:remote} there is a
map $g\colon |\sd \K| \rightarrow \R^{2d}$ such that $g(|\sd \alpha|)
\cap g(|\sd \beta|) = \emptyset$ for any remote $\alpha, \beta \in \K$.

Since $\sd \K = \sd \sd \Delta^{(d)}_{2d+2}$, we have $|\Delta^{(d)}_{2d+2}| =
|\K| = |\sd \K|$, and thus we can also apply $g$ to simplices of
$\Delta^{(d)}_{2d+2}$.

Let $\gamma$ and $\delta$ be disjoint simplices of $\Delta^{(d)}_{2d+2}$. Let
$\alpha$ be a simplex of $\sd \gamma$ and $\beta$ a simplex of $\sd \delta$.
Then $\alpha$ and $\beta$ are remote in $\K$. Thus $g(|\sd \alpha|) \cap g(|\sd
\beta|) = \emptyset$. Consequently, $g(|\gamma|) \cap g(|\delta|) = \emptyset$
for any choice of $\gamma$ and $\delta$. However, this contradicts the Van
Kampen-Flores theorem.

More general part of the theorem is obtained along the same lines when a
generalized version of Theorem~\ref{t:vkf} is used.
\end{proof}

\section*{Acknowledgment}
I would like to thank Xavier Goaoc for point me out that intersection patterns
of convex sets relate to manifold learning.

\bibliographystyle{alpha}
\bibliography{/home/martin/clanky/bib/general}

\begin{thebibliography}{{M}un84}

\bibitem[AL10]{attali-lieutier10}
D.~Attali and A.~Lieutier.
\newblock Reconstructing shapes with guarantees by unions of convex sets
  [extended abstract].
\newblock In {\em Computational geometry ({SCG}'10)}, pages 344--353. ACM, New
  York, 2010.

\bibitem[Eck85]{eckhoff85}
J.~Eckhoff.
\newblock An upper-bound theorem for families of convex sets.
\newblock {\em Geom. Dedicata}, 19(2):217--227, 1985.

\bibitem[FKT94]{freedman-krushkal-teichner94}
M.~H. Freedman, V.~S. Krushkal, and P.~Teichner.
\newblock {Van Kampen's embedding obstruction is incomplete for 2-complexes in
  ${\mathbb R}\sp 4$}.
\newblock {\em Math. Res. Lett.}, 1(2):167--176, 1994.

\bibitem[{H}at01]{hatcher01}
{A}. {H}atcher.
\newblock {\em {A}lgebraic Topology}.
\newblock {C}ambridge {U}niversity {P}ress, {C}ambridge, 2001.

\bibitem[{H}el23]{helly23}
{E}. {H}elly.
\newblock {\"{U}}ber mengen konvexer {K}{\"{o}}rper mit gemeinschaftlichen
  {P}unkten.
\newblock {\em {J}ahresber. {D}eustch. {M}ath.-{V}erein.}, 32:175--176, 1923.

\bibitem[Mat03]{matousek03}
J.~Matou\v{s}ek.
\newblock {\em Using the {B}orsuk-{U}lam Theorem}.
\newblock {S}pringer, {B}erlin etc., 2003.

\bibitem[Mel09]{melikhov09}
S.~A. Melikhov.
\newblock The van {K}ampen obstruction and its relatives.
\newblock {\em Tr. Mat. Inst. Steklova}, 266(Geometriya, Topologiya i
  Matematicheskaya Fizika. II):149--183, 2009.

\bibitem[MTW11]{matousek-tancer-wagner11}
J.~Matou\v{s}ek, M.~Tancer, and U.~Wagner.
\newblock Hardness of embedding simplicial complexes in {$\mathbb R^d$}.
\newblock {\em J. Eur. Math. Soc. (JEMS)}, 13(2):259--295, 2011.

\bibitem[{M}un84]{munkres84}
{J}.~{R}. {M}unkres.
\newblock {\em {E}lements of Algebraic Topology}.
\newblock {A}ddison - {W}esley, 1984.

\bibitem[Per85]{perelman85}
G.~Ya. Perel'man.
\newblock Realization of abstract $k$-skeletons of intersections of convex
  polyhedra in ${R}^{2k-1}$.
\newblock {\em Geometric Questions in the Theory of Functions and Sets
  (Kalinin. Gos. Univ., Kalinin)}, pages 129--131, 1985.
\newblock In {R}ussian.

\bibitem[Tan11]{tancer11surveyarxiv}
M.~Tancer.
\newblock Intersection patterns of convex sets via simplicial complexes, a
  survey.
\newblock Preprint; http://arxiv.org/abs/1102.0417, 2011.

\bibitem[Weg67]{wegner67}
G.~Wegner.
\newblock {\em {E}igenschaften der {N}erven homologisch-einfacher {F}amilien im
  $R^n$}.
\newblock PhD thesis, Universit\"{a}t G\"{o}ttingen, 1967.
\newblock In {G}erman.

\end{thebibliography}

\end{document}